\newtheorem{thm}{Theorem}
\newtheorem{lem}[thm]{Lemma}
\theoremstyle{definition}
\numberwithin{equation}{section}
 \theoremstyle{remark}
\def\R{\mathbb{R}}
\def\S{\mathbb{S}^3(1)}
\def\dt{\partial_t}
\def\dx{\partial_x}
\def\dy{\partial_y}
\def\E{\mathbb{E}}
\def\H{\mathbb{H}}
\def\ta{\tilde A_N}
\def\dpp{\tilde{\nabla}^\perp}
\newcommand{\Sf}{\mathbb{S}}
\newcommand{\Hy}{\mathbb{H}}
\begin{document}
\title{\text  Constant Angle Surfaces in $\mathbb{S}^3(1) \times \R$}
\author []{ Daguang Chen, Gangyi Chen, Hang Chen and Franki Dillen}
\email{dgchen@math.tsinghua.edu.cn}
\email{chen-gy07@mails.tsinghua.edu.cn}
\email{chenhang08@mails.tsinghua.edu.cn}
\address{Department of
Mathematical Sciences, Tsinghua University, Beijing, 100084, P.R.
China}

\email{franki.dillen@wis.kuleuven.be}
\address{Departement Wiskunde, Katholieke Universiteit Leuven,
Celestijnenlaan 200 B, B-3001 Leuven, Belgium}

\thanks{This research was supported by Tsinghua University and K.U.Leuven Bilateral scientific cooperation Fund, project BIL09/10}.

\subjclass[2000]{53B25}

\keywords{Constant angle surfaces, Parallel mean curvature vector,
Minimal surfaces}

\begin{abstract}In this article we study surfaces in $\S\times\R$ for which the $\R$-direction makes a constant angle with the normal plane.
We give a complete classification for such surfaces with parallel mean curvature vector.
\end{abstract}
\maketitle
\renewcommand{\sectionmark}[1]{}

\section{introduction}
In recent years, there has been done some research about surfaces in
a 3-dimensional Riemannian product of a surface $\mathbb{M}^2(c)\times
\R$ (\cite{AR04, FM07, MR04, Ros02}, etc.), where $\mathbb{M}^2(c)$
is the simply-connected 2-dimensional space form of constant curvature $c$, in particular
$\mathbb{M}^2(c)=\R^2, ~\mathbb{H}^2, ~\mathbb{S}^2$ for $c=0, ~-1, ~1$
respectively.

Recently, constant angle surfaces were studied in product spaces
$\mathbb{M}^2(c)\times \R$ (see \cite{DFV09,DFVV07, DM07,DM09,
MN09,Nis09}), where the  angle was considered between the unit
normal of the surface $M$ and the tangent direction to $\mathbb{R}$.
For example, F. Dillen et al. gave the complete classification for
constant angle surfaces in $\mathbb{S}^{2}\times \R$ in
\cite{DFVV07}. The problem of constant angle surfaces was also
investigated in the 3-dimensional Heisenberg group (see
\cite{FMV09}) and in Minkowski space (see \cite{LM09}). In
\cite{To10}, R. Tojeiro gave a complete description of all
hypersurfaces in the product spaces $\Sf^n\times \R$ and
$\Hy^n\times \R$ that have flat normal bundle when regarded as
submanifolds with codimension two of the underlying flat spaces
$\R^{n+2}\supset \Sf^n\times \R$ and $\mathbb{L}^{n+2}\supset
\Hy^n\times \R$. In \cite{DR10}, helix submanifolds in Euclidean space were studied by solving the Eikonal equation.
The applications of constant angle surfaces in the
theory of liquid crystals and of layered fluids were considered by P. Cermelli and A. J. Di Scala in \cite{CdS07}.

In this article we study surfaces in $\S\times\R$ for which the
$\R$-direction makes a constant angle with the normal plane. In
Section 2, we first review some basic equations for constant angle
surfaces in $\S \times \R$. In Section 3, we will prove that the
constant angle surfaces in $\S\times \R$ with parallel
mean curvature vector are minimal (see Theorem \ref{thm1}). In
Section 4, we will give a complete classification for minimal and
constant angle surfaces in $\S \times \R$ (see Theorem
\ref{thm2}).

\section[Preliminaries]{Preliminaries}

Let $\widetilde M=\S\times\R$ be the Riemannian product of $\S$ and
$\R$ with the standard metric $\langle,\rangle$ and the Levi-Civita
connection $\widetilde\nabla$. We denote by $t$ the (global)
coordinate on $\R$ and hence $\dt=\frac\partial{\partial t}$ is the
unit vector field in the tangent bundle $T(\S\times\R)$ that is
tangent to the $\R$-direction.

For $p\in\S\times\R$, the Riemann-Christoffel curvature tensor
$\tilde{R}$ of $\S\times\R$ is given by
\[\langle\tilde{R}(X,Y)Z,W\rangle=\langle X_{\S},W_{\S}\rangle\langle Y_{\S},Z_{\S}\rangle-\langle X_{\S},Z_{\S}\rangle\langle Y_{\S},W_{\S}\rangle,\]
where $X,Y,Z,W\in T_p(\S\times\R)$ and $X_{\S}=X-\langle
X,\dt\rangle\dt$ is the projection of $X$ to the tangent space of
$\S$.

Now consider a surface $M$ in $\S\times\R$. We can decompose $\dt$ as
\begin{eqnarray}
\label{eq01}\dt=\sin\theta T+\cos\theta \xi,
\end{eqnarray}
where $\theta$ is
the angle between $\xi$ and $\dt$, $\xi$ is a unit normal vector to $M$ and $T$ is a unit tangent vector to $M$.

For a constant angle surface $M$ in $\S\times\R$, we mean a surface
for which the angle function $\theta$ is constant on $M$. There are
two trivial cases, $\theta=0$ and $\theta=\frac{\pi}{2}$. The
condition $\theta=0$ means that $\dt$ is always normal, so we get a surface
$\Sigma^2\times\{t_0\}$, where $\Sigma^2$ is a surface in $\S$. In
the second case, $\dt$ is always tangent. This corresponds to the
Riemannian product of a curve in $\S$ and $\R$.

From now on, in the rest of this paper, we only consider the
constant angle surface $M$ with constant angle
$\theta\in(0,\frac{\pi}{2})$. We extend $\{T, \xi\}$ to an
orthonormal frame $\{T, Q, \xi, \eta\}$ on $\S\times\R$, where $T,
Q$ are tangent to $M$ and $\xi, \eta$ are normal to $M$. Since $\dt$
is a parallel vector field in $\S\times\R$, we can obtain from
(\ref{eq01}) that, for any $X\in TM$,
\begin{equation}\label{parallel}
0=\widetilde\nabla_{X}\dt=\sin\theta \nabla_{X}T+\sin\theta
h(X,T)-\cos\theta A_{\xi}X+\cos\theta \nabla_{X}^{\perp}\xi,
\end{equation}
where we use the formulas of Gauss and Weingarten,  $h$ is the second fundamental form of $M$,
 $A_{\xi}$ is the shape operator associated to $\xi$, and $\nabla^{\perp}$ is the normal connection.

Comparing the tangent part and the normal part in (\ref{parallel}),
we have
\begin{equation}\label{eq02}
\left \{\aligned
 \nabla_{X}T&=\cot\theta A_{\xi}X,\\
h(X,T)&=-\cot\theta \nabla_{X}^{\perp}\xi.
\endaligned\right.
\end{equation}
From (\ref{eq02}), we have
\begin{equation*}
\langle A_{\xi}X, T\rangle=\langle A_{\xi}T, X\rangle=0,\quad
\forall X\in TM,
\end{equation*}
that is,
\begin{eqnarray*}
    A_{\xi}T =0.
 \end{eqnarray*}

Therefore, we can suppose the shape operators  with respect to
$\xi$ and $\eta$ are, respectively,
\begin{align}\label{CAS}
 A_{\xi}=\left(
 \begin{array}{cc}
 0 & 0\\
 0 & \lambda
 \end{array}\right),\quad
 A_{\eta}=\left(
 \begin{array}{cr}
 \beta_{1} & \beta_{2}\\
 \beta_{2} & \beta_{3}
 \end{array}\right),
\end{align}
where $\lambda, \beta_{j}~( j=1,2,3)$ are smooth functions defined
on the surface $M$.

From (\ref{eq02}),  we obtain that
\begin{equation}\label{p1}
\left \{\aligned
\nabla_{T}T&=\nabla_{T}Q=0,\\
\nabla_{Q}T&=\lambda\cot\theta Q,\\
\nabla_{Q}Q&=-\lambda\cot\theta T,
\endaligned\right.
\end{equation}
\begin{equation}\label{h1}
\left \{\aligned
h(T,T)&=\beta_{1}\eta,\\
h(T,Q)&=\beta_{2}\eta,\\
h(Q,Q)&=\lambda\xi+\beta_{3}\eta,
\endaligned\right.
\end{equation}
\begin{equation}\label{n1}
\left \{\aligned
 \nabla_{T}^{\perp}\xi &=-\tan\theta\,\beta_{1}\eta,\\
 \nabla_{T}^{\perp}\eta &=\tan\theta\,\beta_{1}\xi,\\
 \nabla_{Q}^{\perp}\xi &=-\tan\theta\,\beta_{2}\eta,\\
 \nabla_{Q}^{\perp}\eta &=\tan\theta\,\beta_{2}\xi.
\endaligned\right.
\end{equation}

Now we can take coordinates $(x, y)$ on $M$ with $\dx=\beta
T,~\dy=\alpha Q$ where $\beta,~\alpha$ are positive functions. From
(\ref{p1}) and the condition $[\dx, \dy]=0$, we find that
 \begin{align}
 &\beta_{y}=0,\label{p4}\\
 &\alpha_{x}=\alpha\beta\lambda\cot\theta.\nonumber
 \end{align}
Equation (\ref{p4}) implies that, after a change of the $x$-coordinate, we can assume $\beta=1$
and thus the metric takes the form
\begin{eqnarray*}
ds^{2}=dx^{2}+\alpha^{2}(x,y)dy^{2}.
\end{eqnarray*}

The Gauss and Ricci equation are, respectively, given by
\begin{align*}
({\widetilde R}(T, Q)T)^{\top}&=R(T, Q)T+A_{h(T, T)}Q-A_{h(Q, T)}T,\\
({\widetilde R}(T, Q)\eta)^{\perp}&=R^{\perp}(T, Q)\eta+h(A_{\eta}T, Q)-h(A_{\eta}Q, T),
\end{align*}
where
\begin{align*}
\widetilde R(X, Y)Z=&\big(\langle Y, Z\rangle-\langle Y,
\dt\rangle\langle Z, \dt\rangle\big)X-\big(\langle X,
Z\rangle-\langle X,
\dt\rangle\langle Z, \dt\rangle\big)Y\\
&-\big(\langle Y, Z\rangle\langle X, \dt\rangle-\langle X,
Z\rangle\langle Y, \dt\rangle\big)\dt,\forall X, Y, Z\in T(\S\times\R)\\
 R^{\perp}(T,
Q)\eta=&\big(\nabla^{\perp}_{T}\nabla^{\perp}_{Q}-\nabla^{\perp}_{Q}\nabla^{\perp}_{T}-\nabla^{\perp}_{[T,
Q]}\big)\eta.
\end{align*}

The Codazzi equations are
\begin{align*}
({\widetilde R}(T, Q)T)^{\perp}&=(\nabla^{\perp}_{T}h)(Q, T)-(\nabla^{\perp}_{Q}h)(T, T),\\
({\widetilde R}(T, Q)Q)^{\perp}&=(\nabla^{\perp}_{T}h)(Q,
Q)-(\nabla^{\perp}_{Q}h)(T, Q),
\end{align*}
where $(\nabla^{\perp}_{X}h)(Y, Z)=
\nabla^{\perp}_{X}\big(h(Y, Z)\big)-h(\nabla_{X}Y, Z)-h(Y, \nabla_{X}Z)$ for any $X, Y, Z\in TM.$

By a direct computation with (\ref{p1})--(\ref{n1}), the equations of Gauss, Ricci and Codazzi yield
  \begin{align}
 \lambda^{2}\cot^{2}\theta+\lambda_{x}\cot\theta+\cos^{2}\theta+\beta_{1}\beta_{3}-\beta_{2}^{2}&=0,
 \label{p6}\\
 \frac{ (\beta_{2})_{y}}{\alpha}+\lambda\cot\theta\sec^{2}\theta\beta_{1}-\lambda\cot\theta\beta_{3}-(\beta_{3})_{x}&=0,\label{p7}\\
 \frac{(\beta_{1})_{y}}{\alpha}-2\lambda\cot\theta\beta_{2}-(\beta_{2})_{x}&=0.\label{p8}
\end{align}

\section{Constant angle surfaces with parallel mean curvature vector}
In this section, we will discuss the constant angle surface $M$ with
parallel mean curvature vector in $\S\times \R$. In fact, we
have
\begin{thm}\label{thm1}
If $M$ is a constant angle surface in $\S\times \R$ with
parallel mean curvature vector $\vec{H}$, then $\vec{H}=0$, that is,
$M$ is a minimal surface in $\S\times \R$.
\end{thm}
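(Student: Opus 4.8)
The plan is to exploit the very rigid structure encoded in equations (\ref{p1})--(\ref{n1}) together with the Codazzi equations (\ref{p6})--(\ref{p8}). The mean curvature vector is $\vec H = \tfrac12\big(\mathrm{tr}\,A_\xi\,\xi + \mathrm{tr}\,A_\eta\,\eta\big) = \tfrac12\big(\lambda\,\xi + (\beta_1+\beta_3)\,\eta\big)$. Assuming $\nabla^\perp \vec H = 0$ and using the formulas (\ref{n1}) for $\nabla^\perp_T\xi,\nabla^\perp_T\eta,\nabla^\perp_Q\xi,\nabla^\perp_Q\eta$, I would write out $\nabla^\perp_T\vec H=0$ and $\nabla^\perp_Q\vec H=0$ componentwise. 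Collecting the $\xi$- and $\eta$-parts in each gives four scalar equations. In the $T$-direction these read, after dividing by the nonzero constants, something like $\lambda_x - \tan\theta\,\beta_1(\beta_1+\beta_3) = 0$ and $(\beta_1+\beta_3)_x + \tan\theta\,\lambda\beta_1 = 0$; in the $Q$-direction (recalling $\dy=\alpha Q$) one gets the analogous pair with a $\tfrac{1}{\alpha}\partial_y$ derivative and $\beta_2$ in place of $\beta_1$, namely $\lambda_y/\alpha - \tan\theta\,\beta_2(\beta_1+\beta_3)=0$ and $(\beta_1+\beta_3)_y/\alpha + \tan\theta\,\lambda\beta_2 = 0$.

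Next I would combine these parallelism relations with the Codazzi equations. Equations (\ref{p7}) and (\ref{p8}) express $(\beta_3)_x$ and $(\beta_2)_x$ (and the $y$-derivatives of $\beta_1,\beta_2$) in terms of the $\beta_i,\lambda$; substituting the $Q$-direction parallelism relations eliminates the $y$-derivatives and leaves purely algebraic/first-order-in-$x$ relations among $\lambda,\beta_1,\beta_2,\beta_3$. The Gauss equation (\ref{p6}) gives a further relation involving $\lambda_x$. The strategy is then to show that this overdetermined system forces $\lambda = 0$ and $\beta_1+\beta_3 = 0$ (equivalently $\vec H=0$). A clean way to package it: set $H_1=\lambda/2$, $H_2=(\beta_1+\beta_3)/2$ and consider $f = H_1^2+H_2^2 = |\vec H|^2$. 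From the $T$-direction relations, $\tfrac12 f_x = H_1\lambda_x/... = \tan\theta\,H_1 H_2(\beta_1 - \beta_1) $-type cancellations should give $f_x = 0$, and similarly $f_y = 0$, so $|\vec H|$ is constant; but one still must rule out a nonzero constant, which is where the Gauss equation and the $x$-dependence of $\alpha$ (via $\alpha_x = \alpha\beta\lambda\cot\theta$) must be brought in.

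Concretely, to kill a nonzero constant I expect to use the pair $\lambda_x - \tan\theta\,\beta_1(\beta_1+\beta_3)=0$ and $(\beta_1+\beta_3)_x + \tan\theta\,\lambda\beta_1 = 0$ to solve for $\lambda_x$ and $(\beta_1+\beta_3)_x$, feed $\lambda_x$ into the Gauss equation (\ref{p6}), and use Codazzi (\ref{p7}) to get $(\beta_3)_x$, hence $(\beta_1)_x$; together with the constancy of $\lambda^2+(\beta_1+\beta_3)^2$ one gets enough relations to derive a contradiction with $\cos^2\theta \ne 0$ unless $\lambda\equiv 0$ and $\beta_1+\beta_3\equiv 0$. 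Once $\vec H$ has constant length it is natural to also differentiate $|\vec H|^2$ in directions transverse to the integral curves and exploit that $\beta_1,\beta_2,\beta_3$ cannot all be locked down without forcing $\lambda=0$.

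I expect the main obstacle to be precisely the last step: getting $\nabla^\perp\vec H=0$ plus Codazzi to give $|\vec H|$ constant is essentially bookkeeping, but excluding a nonzero constant mean curvature requires carefully exploiting the Gauss equation (\ref{p6})---the term $\cos^2\theta$ there is crucial and is what ultimately distinguishes $\S\times\R$ from, say, flat backgrounds. The computation is delicate because several terms nearly cancel, and one must track signs and the factor $\cot\theta$ (which is finite and nonzero since $\theta\in(0,\tfrac\pi2)$) throughout; I would organize it by first proving $\lambda H_2 = 0$ and $\beta_1 H_1 = 0$ pointwise from the parallelism relations after using constancy of $f$, then splitting into the cases $H=0$ (done) and $H\ne 0$ on an open set, in which case $\lambda=0$ and $\beta_1=0$ there, and finally feeding $\lambda=0$ back into (\ref{p6}) to obtain $\beta_1\beta_3-\beta_2^2 = -\cos^2\theta < 0$, which combined with $\beta_1=0$ gives $-\beta_2^2<0$ (consistent) but then the remaining Codazzi equations force $\beta_2,\beta_3$ to vanish as well, contradicting $\cos^2\theta\neq 0$; hence $H\equiv 0$.
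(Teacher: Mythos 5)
Your setup is correct: the four scalar equations you extract from $\nabla^\perp\vec H=0$ are exactly the paper's (\ref{p9})--(\ref{p12}) (modulo a sign: the correct relation is $\lambda_x=-(\beta_1+\beta_3)\beta_1\tan\theta$), and your observation that $|\vec H|^2=\tfrac14\bigl(\lambda^2+(\beta_1+\beta_3)^2\bigr)$ is constant is true --- but it holds automatically for any parallel normal field, since $\nabla^\perp$ is metric, and so it carries no information beyond (\ref{p9})--(\ref{p12}) themselves. The genuine gap is the step where you assert that ``the parallelism relations after using constancy of $f$'' yield the pointwise identities $\lambda(\beta_1+\beta_3)=0$ and $\beta_1\lambda=0$. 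Nothing in (\ref{p9})--(\ref{p12}), nor in their direct combination with (\ref{p6})--(\ref{p8}), gives these algebraic relations; they are essentially the content of the theorem, not bookkeeping. The paper has to work considerably harder even to reach the \emph{weaker} relation $\lambda\beta_2(\beta_1+\beta_3)=0$: it combines (\ref{p6}) with (\ref{p9}) to get $\beta_1^2+\beta_2^2=\cot^2\theta(\lambda^2+\sin^2\theta)>0$, writes $(\beta_1,\beta_2)$ in polar form with an angle $\gamma$, solves the Codazzi equations for $\gamma_x$ and $\gamma_y$, and only then extracts the algebraic relation from the integrability condition $\lambda_{xy}=\lambda_{yx}$; the residual factor $\beta_2$ then forces a delicate pointwise case analysis (including a limiting argument at points where $\beta_2$ vanishes but not on a neighborhood) before one can conclude $\lambda\equiv0$ and hence $\beta_1+\beta_3\equiv 0$. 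Your proposal skips precisely this compatibility computation, which is where all of the difficulty is concentrated.

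Your endgame is also not quite right as stated: if $\lambda=0$ and $\beta_1=0$ on an open set, then (\ref{p6}) gives $\beta_2^2=\cos^2\theta\neq0$, so $\beta_2$ does \emph{not} vanish there; the contradiction instead comes from $0=\lambda_y=-\alpha\beta_3\beta_2\tan\theta$, which kills $\beta_3$ and hence $\vec H$ on that set. That final portion is repairable, but without a derivation of the intermediate algebraic relations the argument does not constitute a proof.
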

\begin{proof}
Since the  mean curvature vector $\vec{H}$ of $M$ is parallel, that
is, $\nabla^{\perp}\vec{H}=0$, from (\ref{n1}), we have
\begin{align}
\lambda_{x}&=-(\beta_{1}+\beta_{3})\beta_{1}\tan\theta,\label{p9}\\
(\beta_{1})_{x}+(\beta_{3})_{x}&=\lambda\beta_{1}\tan\theta,\label{p10}
\end{align}
and
\begin{align}
\lambda_{y}&=-\alpha(\beta_{1}+\beta_{3})\beta_{2}\tan\theta,\label{p11}\\
(\beta_{1})_{y}+(\beta_{3})_{y}&=\alpha\lambda\beta_{2}\tan\theta.\label{p12}
\end{align}
From (\ref{p6}) and (\ref{p9}), we get
\begin{equation*}
\beta_{1}^{2}+\beta_{2}^{2}=\cot^{2}\theta(\lambda^{2}+\sin^{2}\theta)>0.
\end{equation*}
Thus we can set
\begin{equation}\label{p13}
\left\{\begin{aligned}
\beta_{1}&=\cot\theta\sqrt{\lambda^{2}+\sin^{2}\theta}\cos\gamma,\\
\beta_{2}&=\cot\theta\sqrt{\lambda^{2}+\sin^{2}\theta}\sin\gamma,
\end{aligned}\right.
\end{equation}
for some function $\gamma$ on $M$.

Taking the derivatives of (\ref{p13}), we
obtain
\begin{align}
(\beta_{1})_{x}&=-\beta_{2}\gamma_{x}+\frac{\lambda\lambda_{x}}
{\beta_{1}^{2}+\beta_{2}^{2}}\beta_{1}\cot^{2}\theta,\label{p15}\\
(\beta_{1})_{y}&=-\beta_{2}\gamma_{y}+\frac{\lambda\lambda_{y}}
{\beta_{1}^{2}+\beta_{2}^{2}}\beta_{1}\cot^{2}\theta,\label{p16}\\
(\beta_{2})_{x}&=\beta_{1}\gamma_{x}+\frac{\lambda\lambda_{x}}
{\beta_{1}^{2}+\beta_{2}^{2}}\beta_{2}\cot^{2}\theta,\label{p17}\\
(\beta_{2})_{y}&=\beta_{1}\gamma_{y}+\frac{\lambda\lambda_{y}}
{\beta_{1}^{2}+\beta_{2}^{2}}\beta_{2}\cot^{2}\theta.\label{p18}
\end{align}

Using (\ref{p9})--(\ref{p11}), (\ref{p15}) and (\ref{p18}), from (\ref{p7}) we get
\begin{eqnarray}
\frac{\beta_{1}}{\alpha}\gamma_{y}-\beta_{2}\gamma_{x}=2\lambda\beta_{3}\cot\theta.\label{p19}
\end{eqnarray}
Using (\ref{p9}), (\ref{p11}), (\ref{p16}) and (\ref{p17}), from (\ref{p8}) we get
\begin{eqnarray}
\frac{\beta_{2}}{\alpha}\gamma_{y}+\beta_{1}\gamma_{x}=-2\lambda\beta_{2}\cot\theta.\label{p20}
\end{eqnarray}
From (\ref{p19}) and (\ref{p20}) we have
\begin{equation}\label{p21}
\left\{\begin{aligned}
\gamma_{x}&=\frac{-2\lambda\cot\theta}
{\beta_{1}^{2}+\beta_{2}^{2}}\beta_{2}(\beta_{1}+\beta_{3}),\\
\gamma_{y}&=\frac{2\alpha\lambda\cot\theta}
{\beta_{1}^{2}+\beta_{2}^{2}}(\beta_{1}\beta_{3}-\beta_{2}^{2}).
\end{aligned}\right.
\end{equation}
Putting (\ref{p21}) into (\ref{p15})--(\ref{p18}), from (\ref{p9}), (\ref{p11}) and (\ref{p12}), we have
\begin{align*}
\lambda_{xy}&=-\tan\theta\Big[(\beta_{1})_{y}(\beta_{1}+\beta_{3})+\beta_{1}(\beta_{1}
+\beta_{3})_{y}\Big]\nonumber\\
&=-\tan\theta\left\{(\beta_{1}+\beta_{3})\Big[-\beta_{2}\gamma_{y}
-\frac{\alpha\lambda\cot\theta}{\beta_{1}^{2}+\beta_{2}^{2}}\beta_{1}\beta_{2}(\beta_{1}+\beta_{3})\Big]+\alpha\lambda\beta_{1}\beta_{2}\tan\theta\right\}\nonumber\\
&=\tan\theta\left\{(\beta_{1}+\beta_{3})\frac{\alpha\lambda\cot\theta}{\beta_{1}^{2}+\beta_{2}^{2}}
\Big[2\beta_{2}(\beta_{1}\beta_{3}-\beta_{2}^{2})+\beta_{1}\beta_{2}(\beta_{1}+\beta_{3})\Big]-\alpha\lambda\beta_{1}\beta_{2}\tan\theta\right\}
\nonumber\\
&=\beta_{2}(\beta_{1}+\beta_{3})\frac{\alpha\lambda}{\beta_{1}^{2}+\beta_{2}^{2}}(3\beta_{1}\beta_{3}-2\beta_{2}^{2}+\beta_{1}^{2})
-\alpha\lambda\beta_{1}\beta_{2}\tan^{2}\theta.
\end{align*}
Similarly, we also obtain
\begin{align*}
\lambda_{yx}&=-\tan\theta\Big[\alpha_{x}\beta_{2}(\beta_{1}+\beta_{3})
+\alpha\beta_{2}(\beta_{1}+\beta_{3})_{x}+\alpha(\beta_{2})_{x}(\beta_{1}+\beta_{3})\Big]\nonumber\\
&=-\tan\theta\Big[
\alpha\lambda\cot\theta\beta_{2}(\beta_{1}+\beta_{3})+\alpha\lambda\beta_{1}\beta_{2}\tan\theta-
\alpha(\beta_{1}+\beta_{3})\frac{\lambda\cot\theta}{\beta_{1}^{2}
+\beta_{2}^{2}}3\beta_{1}\beta_{2}(\beta_{1}+\beta_{3})
\Big]\nonumber\\
&=\beta_{2}(\beta_{1}+\beta_{3})\frac{\alpha\lambda}{\beta_{1}^{2}+\beta_{2}^{2}}(3\beta_{1}\beta_{3}+2\beta_{1}^{2}-\beta_{1}^{2})
-\alpha\lambda\beta_{1}\beta_{2}\tan^{2}\theta.
\end{align*}
Since $\alpha>0$, from the integrable condition
$\lambda_{xy}=\lambda_{yx}$, we have
\begin{eqnarray}
\lambda\beta_{2}(\beta_{1}+\beta_{3})=0.\label{con1}
\end{eqnarray}

We claim that $\lambda(p)=0$ for any $p\in M$.  Then from (\ref{p9}) and (\ref{p11}) we get $\beta_{1}+\beta_{3}=0$
since $\beta_{1}$ and $\beta_{2}$ cannot be zero simultaneously. Hence $M$ is minimal in $\S\times\R.$

To prove the claim, we discuss the equation (\ref{con1}) in two cases.

\textbf{Case 1}. $\beta_{2}\neq0$ at some point $p\in M$.

In this case, there exists a neighborhood $U$ of $p$ such that
$\lambda(\beta_{1}+\beta_{3})=0$ in $U$. If $\lambda(p)\neq0$, then there exists a neighborhood $ V\subset U$ such that
$\beta_{1}+\beta_{3}=0$ in $V$.  This contradicts (\ref{p12}).  Hence $\lambda(p)=0$.

\textbf{Case 2}. $\beta_{2}=0$ at some point $p\in M$.

First we assume that there exists a neighborhood $U$ of $p$ such that
$\beta_{2}=0$ in $U$. Then we get, in  $U$,
\begin{eqnarray*}
(\beta_{1})_{x}=-\lambda\cot\theta(\beta_{1}-\beta_{3})
\end{eqnarray*}
from (\ref{p7}) and (\ref{p10}).
On the other hand, from (\ref{p15}) and (\ref{p9})  we have, in  $U$,
\begin{eqnarray*}
(\beta_{1})_{x}=-\lambda\cot\theta(\beta_{1}+\beta_{3}).
\end{eqnarray*}
If $\lambda(p)\neq 0$, there exists a neighborhood $ V\subset U$  such that
$\lambda\neq 0$ in $V$. Then $\beta_{3}=0$ in $V$. Hence,
 $\beta_{1}=0$ in $V$ from (\ref{p7}). This contradicts $\beta_{1}^{2}+\beta_{2}^{2}>0$.  Hence $\lambda(p)=0$.

Otherwise,  there exists a sequence $\{q_{i}\}_{i=1}^{\infty}$ approaching $p$ such that $\beta_{2}(q_{i})\neq0$.
Then
$\lambda(q_{i})(\beta_{1}+\beta_{3})(q_{i})=0$. By taking the limit, $\lambda(p)(\beta_{1}+\beta_{3})(p)=0$.
If $\lambda(p)\neq0$, then $(\beta_{1}+\beta_{3})(p)=0$. From (\ref{con1}), there exists a neighborhood $U$ of $p$ such that
$\lambda\neq0$ in $U$, which implies
$\beta_{2}(\beta_{1}+\beta_{3})=0$ in $U$. Taking derivatives with respect to $x$ and $y$, using (\ref{p9})--(\ref{p12}), (\ref{p17}),
(\ref{p18}) and (\ref{p21}), we get
\begin{gather}
-\frac{\lambda\beta_{1}\beta_{2}(\beta_{1}+\beta_{3})^{2}\cot\theta}{\beta_{1}^{2}+\beta_{2}^{2}}+\lambda\beta_{1}\beta_{2}\tan\theta=0,\label{der1}\\
\frac{2\alpha\lambda\beta_{1}(\beta_{1}+\beta_{3})(\beta_{1}\beta_{3}-\beta_{2}^{2})\cot\theta}{\beta_{1}^{2}+\beta_{2}^{2}}+\alpha\lambda\beta_{2}^{2}\tan\theta=0.\label{der2}
\end{gather}
(\ref{der2})$\times\beta_{1}-$(\ref{der1})$\times\alpha\beta_{2}$, we have, in $ U$,
\begin{equation}\label{der3}
\frac{\alpha\lambda\cot\theta}{\beta_{1}^{2}+\beta_{2}^{2}}\beta_{1}(\beta_{1}+\beta_{3})
(2\beta_{1}^{2}\beta_{3}-\beta_{1}\beta_{2}^{2}+\beta_{3}\beta_{2}^{2})=0.
\end{equation}

Since $\beta_{2}(p)=0$, we can assume $\beta_{1}(p)>0$ without loss of generality. Hence $\beta_{3}(p)<0$ from $(\beta_{1}+\beta_{3})(p)=0$.  Then there exists a neighborhood $ V\subset U$  such that  $\beta_{1}>0, \beta_{3}<0$ in $V$. Thus in $V$, we have
$$2\beta_{1}^{2}\beta_{3}-\beta_{1}\beta_{2}^{2}+\beta_{3}\beta_{2}^{2}<0.$$
Then (\ref{der3}) implies that $\beta_{1}+\beta_{3}=0$ in $V$. This contradicts (\ref{p10}). Therefore, $\lambda(p)=0$.

Hence we have proved the claim and completed the proof of
Theorem \ref{thm1}.
\end{proof}

\section{Classification of minimal and constant angle surfaces}

In this section, we consider the minimal and constant angle surface $M$ in $\S\times\R$.
\begin{lem}\label{lem1}
Let $M$ be a minimal and constant angle surface in $\S\times\R$. Then the shape operators  with respect to
$\xi$ and $\eta$ are, respectively,
\begin{align*}
 A_{\xi}=\left(
 \begin{array}{cc}
 0 & 0\\
 0 & 0
 \end{array}\right),\quad
 A_{\eta}=\left(
 \begin{array}{cr}
 \beta_{1} & \beta_{2}\\
 \beta_{2} & -\beta_{1}
 \end{array}\right),
\end{align*}
where $\beta_1$ and $\beta_2$ are constants, satisfying
$\beta_1^2+\beta_2^2=\cos^2\theta$.
\end{lem}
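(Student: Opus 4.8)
The plan is to let the minimality condition collapse the structure equations recorded in Section~2. First, by (\ref{h1}) the mean curvature vector of $M$ is
\[
\vec H=\frac12\bigl(h(T,T)+h(Q,Q)\bigr)=\frac12\bigl(\lambda\,\xi+(\beta_1+\beta_3)\,\eta\bigr).
\]
Since $M$ is minimal, $\vec H=0$, hence $\lambda\equiv 0$ and $\beta_3\equiv-\beta_1$. Plugging $\lambda=0$ and $\beta_3=-\beta_1$ into (\ref{CAS}) yields exactly $A_\xi=0$ and $A_\eta=\begin{pmatrix}\beta_1&\beta_2\\\beta_2&-\beta_1\end{pmatrix}$, which is the claimed form of the shape operators.

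Next I would read off the length of $A_\eta$ from the Gauss equation: setting $\lambda=0$, $\beta_3=-\beta_1$ in (\ref{p6}) leaves $\cos^2\theta-\beta_1^2-\beta_2^2=0$, i.e.\ $\beta_1^2+\beta_2^2=\cos^2\theta$. In particular $(\beta_1,\beta_2)$ is nowhere zero on $M$, so one may write $\beta_1=\cos\theta\cos\gamma$, $\beta_2=\cos\theta\sin\gamma$ for a smooth function $\gamma$ on $M$; this is precisely (\ref{p13}) specialized to $\lambda=0$.

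It then remains to prove that $\beta_1,\beta_2$ (equivalently $\gamma$) are constant, and here minimality makes the remaining PDEs almost trivial. With $\lambda=0$ and $\beta_3=-\beta_1$, the Codazzi equations (\ref{p7}) and (\ref{p8}) reduce to
\[
\frac{(\beta_2)_y}{\alpha}+(\beta_1)_x=0,\qquad \frac{(\beta_1)_y}{\alpha}-(\beta_2)_x=0,
\]
while differentiating $\beta_1^2+\beta_2^2=\cos^2\theta$ gives $\beta_1(\beta_1)_x+\beta_2(\beta_2)_x=0$ and $\beta_1(\beta_1)_y+\beta_2(\beta_2)_y=0$. Using the two Codazzi relations to eliminate $(\beta_1)_x$ and $(\beta_2)_x$ from the first of these converts the pair of algebraic relations into the homogeneous linear system
\[
\beta_1(\beta_1)_y+\beta_2(\beta_2)_y=0,\qquad \beta_2(\beta_1)_y-\beta_1(\beta_2)_y=0,
\]
whose coefficient determinant equals $-(\beta_1^2+\beta_2^2)=-\cos^2\theta\neq0$ since $\theta\in(0,\frac{\pi}{2})$. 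Hence $(\beta_1)_y=(\beta_2)_y=0$, and then the Codazzi relations force $(\beta_1)_x=(\beta_2)_x=0$ as well, so $\beta_1$ and $\beta_2$ are constant. (Alternatively, since a minimal surface automatically has parallel mean curvature vector, equation (\ref{p21}) applies and immediately gives $\gamma_x=\gamma_y=0$ when $\lambda=0$.)

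I do not anticipate any genuine obstacle: the substantive input, that $\lambda$ vanishes and $\beta_3=-\beta_1$, is immediate from $\vec H=0$, and the Gauss equation then hands over $\beta_1^2+\beta_2^2=\cos^2\theta$. The only thing to be careful about is the bookkeeping of the $2\times2$ system assembled from the Codazzi equations and this identity; but its determinant is $-\cos^2\theta\neq0$, so nondegeneracy is guaranteed by $\theta\in(0,\frac{\pi}{2})$, and constancy of $\beta_1,\beta_2$ follows at once.
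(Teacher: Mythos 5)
Your proposal is correct and follows essentially the same route as the paper: minimality forces $\lambda=0$ and $\beta_3=-\beta_1$ via the trace of the second fundamental form, the Gauss equation (\ref{p6}) then gives $\beta_1^2+\beta_2^2=\cos^2\theta$, and the reduced Codazzi equations combined with this identity yield constancy. The only cosmetic difference is that the paper first uses $\lambda=0$ to note $M$ is flat and normalizes $\alpha=1$ before writing the Codazzi equations, and it leaves the final nondegenerate $2\times 2$ linear system (whose determinant is $-\cos^2\theta\neq 0$) implicit, whereas you spell it out.
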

\begin{proof}
From (\ref{CAS}) and the minimality of $M$ in $\S\times \R$,
the shape operator $ A_{\xi}$ associated to $\xi$ is
\begin{eqnarray}
 A_{\xi}=\left(
 \begin{array}{cc}
 0 & 0\\
 0 & 0
 \end{array}\right)
\end{eqnarray}
Hence, we have
\begin{eqnarray*}
\nabla_{T}T=\nabla_{T}Q= \nabla_{Q}T= \nabla_{Q}Q=0,
\end{eqnarray*}
which means that $M$ is flat. The coordinates $(x, y)$ on $M$ now can be chosen such that $\dx=T, \dy=Q$ (i.e.  $\alpha=1$).

From the minimality of $M$ in $\S\times \R$, the shape operator $A_\eta$ becomes
\begin{eqnarray*}
 A_{\eta}=\left(
 \begin{array}{cr}
 \beta_{1} & \beta_{2}\\
 \beta_{2} & -\beta_{1}
 \end{array}\right).
\end{eqnarray*}

The equations of Gauss, Ricci, and Codazzi (\ref{p6})--(\ref{p8}) are
  \begin{align*}
  &\beta_{1}^{2}+\beta_{2}^{2}=\cos^{2}\theta,\\
  &(\beta_{2})_{y}=-(\beta_{1})_{x},\\
  &(\beta_{1})_{y}=(\beta_{2})_{x}.
\end{align*}
The above equations yield that both $\beta_{1}$ and $\beta_{2}$ are
constant.
\end{proof}
Now let us consider
$\S\times\R$ as a hypersurface in $\E^5$ and denote $\dt$ by
$(0,0,0,0,1)$. We obtain the following classification theorem.
\begin{thm}\label{thm2}
A surface $M$ immersed in $\S\times\R$ is a minimal and constant angle
surface if and only if the immersion
\begin{eqnarray*}
   F \colon\quad M & \rightarrow &  \S\times\R \subset \E^5 \\
   (x,y) &\mapsto& F(x,y)
\end{eqnarray*}
is {\em(}up to isometries of \ $\S\times\R${\em)} locally given by
\begin{equation}\label{Sol}
\begin{aligned}
F(x,y)=&~(c_1\cos(\mu_1x+\nu_2y), c_1\sin(\mu_1x+\nu_2y), c_2\cos(\mu_2x-\nu_1y),\\
&~~~c_2\sin(\mu_2x-\nu_1y), x\sin\theta),
\end{aligned}
\end{equation}
where $\theta\in(0,\frac{\pi}{2})$ is the constant angle, $\nu_1\in
[1,1+\cos^2\theta]$ is a constant, and $\nu_2$, $\mu_1$, $\mu_2$,
$c_1$, $c_2$ are nonnegative constants given by
\begin{align*}
&\nu_2^2=\frac{1+\cos^2\theta-\nu_1^2}{1-\nu_1^2\sin^2\theta},~\mu_1^2=\frac{\nu_1^2\cos^4\theta}{1-\nu_1^2\sin^2\theta},~\mu_2^2=1+\cos^2\theta-\nu_1^2,\\
&c_1^2=\frac{1-\nu_1^2\sin^2\theta}{1+\cos^2\theta-\nu_1^2\sin^2\theta},~c_2^2=\frac{\cos^2\theta}{1+\cos^2\theta-\nu_1^2\sin^2\theta}.
\end{align*}
\end{thm}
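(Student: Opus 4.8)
The plan is to prove both implications by reducing everything to the structure equations established in Lemma~\ref{lem1}. First I would verify the easy direction: given the explicit immersion~\eqref{Sol}, one checks by direct differentiation that $F$ maps into $\S\times\R$ (i.e.\ $c_1^2+c_2^2=1$ and the last coordinate is $x\sin\theta$, which forces $\dx$ to have $\partial_t$-component $\sin\theta$), computes the induced metric, the unit normals $\xi,\eta$ of $M$ in $\S\times\R$, and reads off that the angle between $\partial_t$ and the normal plane is the constant $\theta$ and that $A_\xi=0$ (so $M$ is minimal). Here the algebraic relations among $\nu_1,\nu_2,\mu_1,\mu_2,c_1,c_2$ are exactly what is needed to make $|F_x|=1$, $\langle F_x,F_y\rangle=0$ and the normal components come out right; this is a bookkeeping computation.

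For the main (only if) direction, I would start from Lemma~\ref{lem1}: on a minimal constant angle surface we may choose flat coordinates $(x,y)$ with $\partial_x=T$, $\partial_y=Q$, $A_\xi=0$, and $A_\eta=\mathrm{diag}$-type with constant entries $\beta_1,\beta_2$ satisfying $\beta_1^2+\beta_2^2=\cos^2\theta$. The idea is to write $F=(F_1,F_2,F_3,F_4,F_5)\colon M\to\E^5$ and derive a linear constant-coefficient PDE system for $F$. Using the Gauss formula $\widetilde\nabla_{\partial_i}\partial_j=\nabla_{\partial_i}\partial_j+h(\partial_i,\partial_j)$ together with \eqref{p1}, \eqref{h1}, \eqref{n1} (all with $\alpha=1$, $\lambda=0$, $\beta_3=-\beta_1$) and the fact that $\S\times\R\subset\E^5$ has second fundamental form given by the $\S$-projection, I would compute all second derivatives $F_{xx},F_{xy},F_{yy}$ as explicit \emph{constant}-coefficient linear combinations of $F$, $F_x$, $F_y$, $\xi$, $\eta$, and then also express $\xi_x,\xi_y,\eta_x,\eta_y$ the same way (via Weingarten plus \eqref{n1} and the ambient second fundamental form). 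This yields a first-order linear system with constant coefficients for the $\E^5$-valued frame $(F,F_x,F_y,\xi,\eta)$, hence $F$ itself satisfies a linear ODE/PDE whose solution is a combination of trigonometric (and possibly linear) functions of $x$ and $y$. Decoupling the system — diagonalizing the coefficient matrix — produces the two angular frequencies $\mu_1x+\nu_2y$ and $\mu_2x-\nu_1y$, the amplitudes $c_1,c_2$, and the linear term $x\sin\theta$ in the last slot; matching the resulting expressions with the constraints $F\in\S\times\R$, $|F_x|=|F_y|=1$ (after normalizing $\alpha=1$), $\langle F_x,F_y\rangle=0$, and the constant-angle condition pins down the stated formulas for $\nu_2,\mu_1,\mu_2,c_1,c_2$ in terms of the single free parameter $\nu_1$, and the admissible range $\nu_1\in[1,1+\cos^2\theta]$ comes from requiring $\mu_2^2\ge 0$ and $\nu_2^2\ge0$ (equivalently $c_1^2,c_2^2\in(0,1]$). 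Finally, uniqueness up to isometries of $\S\times\R$ follows from the standard rigidity theorem for submanifolds once all of $g$, $h$, $\nabla^\perp$, and the relative position of $\partial_t$ are shown to agree with those of the model.

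The main obstacle I expect is the decoupling step: the $5\times5$ (or, after using $|F|$-type relations, effectively $4\times4$) constant-coefficient linear system for $(F,F_x,F_y,\xi,\eta)$ must be diagonalized carefully, and one has to check that the eigenvalues are purely imaginary of exactly two distinct moduli (plus a zero eigenvalue accounting for the $x\sin\theta$ term), which is where the precise relations $\beta_1^2+\beta_2^2=\cos^2\theta$ and the curvature identity \eqref{p6} (now $\cos^2\theta+\beta_1\beta_3-\beta_2^2=\cos^2\theta-\beta_1^2-\beta_2^2 = \cos^2\theta-\cos^2\theta=0$... wait, rather $\cos^2\theta-2\beta_1^2-$ correction) enter; keeping track of the constants so that the final answer matches \eqref{Sol} exactly, and correctly identifying which rotations/translations of $\S\times\R$ are used to normalize the integration constants, is the delicate part. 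A secondary subtlety is handling the possible degeneracy $\nu_1=1$ (where $\mu_1=0$, so the first pair of coordinates traces a fixed circle) and $\nu_1^2=1+\cos^2\theta$ (where $\mu_2=\nu_2$ related degeneration), but these are just boundary cases of the same formula and do not require separate arguments.
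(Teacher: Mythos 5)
Your overall strategy coincides with the paper's: the direct implication is the same bookkeeping verification, and for the converse both you and the authors regard $M$ as a codimension-three submanifold of $\E^5$, use Lemma \ref{lem1} to turn the Gauss--Weingarten equations into a constant-coefficient linear system for $(F,F_x,F_y,\xi,\eta)$ (the paper writes it as the scalar system \eqref{eq1}--\eqref{eq5} and then eliminates down to the fourth-order ODEs \eqref{eq6} and \eqref{eq7}, which is your frame system after diagonalization), solves by trigonometric functions, and fixes the constants by the orthonormality of $F_x,F_y,\xi,\eta$.

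The one step in your plan that would fail as written is the claim that the boundary cases are ``just boundary cases of the same formula and do not require separate arguments.'' When $\beta_1\beta_2=0$ the elimination degenerates: $\eta_i$ can no longer be recovered from \eqref{eq1} (resp.\ \eqref{eq2}) by dividing by $\beta_1$ (resp.\ $\beta_2$), and the characteristic polynomial \eqref{eq6-1} acquires $z=0$ as a double root, so the general solution of the fourth-order ODE contains affine terms $a+bx$ that must be excluded by a separate argument. This is exactly why the paper splits the converse into the three cases $\beta_2=0$, $\beta_1=0$, $\beta_1\beta_2\neq0$ and rederives the solution form in each. You have also misidentified where the degeneration occurs: at $\nu_1=1$ (i.e.\ $\beta_1=0$) one has $\mu_1=\mu_2=\cos\theta\neq0$ and $\nu_2=1$, not $\mu_1=0$; the frequencies $\mu_2$ and $\nu_2$ vanish at the other endpoint $\nu_1=\sqrt{1+\cos^2\theta}$ (i.e.\ $\beta_2=0$), where the solution separates as $f_i(x)+g_i(y)$. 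A smaller omission: for $\beta_1\beta_2\neq0$ the comparison of the two expressions for $\eta_i$ only yields $(c_1^{(i)})^2=(c_4^{(i)})^2$, etc., and the sign of $\beta_1\beta_2$ decides whether the phases are $\mu_1x+\nu_2y$, $\mu_2x-\nu_1y$ or their reflections; this must be tracked, the paper absorbing the second possibility by the coordinate change $y\mapsto -y$.
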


\begin{proof}

First we prove that the given immersion (\ref{Sol}) is a minimal and
constant angle surface in $\S\times\R.$ To see this, we calculate
the tangent vectors
\begin{align*}
F_x=&(-\mu_1c_1\sin(\mu_1x+\nu_2y), \mu_1c_1\cos(\mu_1x+\nu_2y), -\mu_2c_2\sin(\mu_2x-\nu_1y),\\
&~~~~\mu_2c_2\cos(\mu_2x-\nu_1y), \sin\theta),\\
F_y=&(-\nu_2c_1\sin(\mu_1x+\nu_2y), \nu_2c_1\cos(\mu_1x+\nu_2y), \nu_1c_2\sin(\mu_2x-\nu_1y),\\
&~~-\nu_1c_2\cos(\mu_2x-\nu_1y), 0).
\end{align*}
The normal $N$ of $\S\times\R$ in $\E^5$ is
\begin{align*}
N=(c_1\cos(\mu_1x+\nu_2y), c_1\sin(\mu_1x+\nu_2y), c_2\cos(\mu_2x-\nu_1y), c_2\sin(\mu_2x-\nu_1y), 0).
\end{align*}
Let
\begin{align*}
\xi=&(\mu_1c_1\tan\theta\sin(\mu_1x+\nu_2y),-\mu_1c_1\tan\theta\cos(\mu_1x+\nu_2y), \mu_2c_2\tan\theta\sin(\mu_2x-\nu_1y),\\
&~~~-\mu_2c_2\tan\theta\cos(\mu_2x-\nu_1y),\cos\theta),\\
\eta=&(-c_2\cos(\mu_1x+\nu_2y),-c_2\sin(\mu_1x+\nu_2y),c_1\cos(\mu_2x-\nu_1y),c_1\sin(\mu_2x-\nu_1y),0).
\end{align*}
We can verify that $F_x$, $F_y$, $\xi$, $\eta$, $N$ are orthonormal
in $\E^{5}$. Thus $\{\xi, \eta\}$ is a basis of the normal plane of
$M$ in $\S\times\R$. Moreover, we have
$$\dt=\sin\theta F_x+\cos\theta\xi,$$
which means that the angle between $\dt$ and the normal plane is
constant $\theta$.

Furthermore, we can calculate the shape operators with respect to
$\xi$ and $\eta$ on $M$ in $\S\times\R$ respectively,
\begin{align*}
 A_{\xi}=\left(
 \begin{array}{cc}
 0 & 0\\
 0 & 0
 \end{array}\right),\quad
 A_{\eta}=\left(
 \begin{array}{cr}
 \beta_{1}' & \beta_{2}'\\
 \beta_{2}' & \beta_{3}'
 \end{array}\right),
\end{align*}
where
$$\beta_{1}'=-\beta_{3}'=\frac{(\nu_1^2-1)\cos\theta}{\sqrt{1-\nu_1^2\sin^2\theta}},\quad
\beta_{2}'=\frac{\nu_1\cos\theta\sqrt{1+\cos^2\theta-\nu_1^2}}{\sqrt{1-\nu_1^2\sin^2\theta}}.$$
Therefore, $M$ is a minimal surface in $\S\times\R$. Moreover, we
can see that $(\beta_{1}')^2+(\beta_{2}')^2=\cos^2\theta$.


Conversely, let us consider $M$ as an immersed surface
in $\E^5$ with codimension 3. Denote by $D,\dpp$ the Euclidean connection and the normal
connection of $M$ in $\E^5$, respectively. For the immersion
$F=(F_{1},F_{2},F_{3},F_{4},F_{5}):M\to\S\times\R\subset\E^{5}$, we
have three unit normals
\begin{align*}
N&=(F_{1},F_{2},F_{3},F_{4},0),\\
\xi&=(\xi_{1},\xi_{2},\xi_{3},\xi_{4},\cos\theta),\\
\eta&=(\eta_{1},\eta_{2},\eta_{3},\eta_{4},0),
\end{align*}
where $N$ is normal to $\S\times\R$ with the shape operator $\ta$.

For simplicity, we denote the first four components of a vector in $\E^5$ by adding
a tilde on it, say $F=(\tilde{F},F_5)$, etc.

Noticing  that $\langle T, \dt\rangle=(F_{5})_{x}=\sin\theta,\  \langle Q, \dt\rangle=(F_{5})_{y}=0,$ we can
take $F_{5}=x\sin\theta$ without loss of generality.

For any $X\in T_p M$, we have
\begin{align*}
\dpp_{X}N&=\langle D_{X}N, \xi\rangle\xi+\langle D_{X}N,
\eta\rangle\eta\\
&=\langle X-\langle X,\dt\rangle\dt,\xi\rangle\xi+\langle X-\langle
X,\dt\rangle\dt,\eta\rangle\eta\\
&=-\sin\theta\cos\theta\langle X, T\rangle\xi.
\end{align*}
By the Weingarten formula, we have
\begin{align}
\ta T&=-D_{T}N+\dpp_{T}N\nonumber\\
              &=-(\tilde{F}_{x},0)-\sin\theta\cos\theta(\tilde{\xi},\cos\theta),\label{eq04}\\
\ta Q&=-D_{Q}N+\dpp_{Q}N\nonumber\\
              &=-(\tilde{F}_{y},0).\nonumber
\end{align}
Thus the shape operator associated to $N$ is
\begin{eqnarray*}
 \ta=\left(
 \begin{array}{cc}
 -\sin^{2}\theta & 0\\
 0 & -1
 \end{array}\right).
\end{eqnarray*}
Comparing the first four components of (\ref{eq04}), we get
\begin{eqnarray*}
\xi_i=-\tan\theta(F_i)_x.
\end{eqnarray*}

Taking $(X, Y)=(T, T),  (T, Q), (Q, Q)$ in
$D_{X}Y=\tilde{\nabla}_{X}Y+\tilde{h}(X, Y)$, and $X=T,
Q$ in $D_{X}\eta=-\tilde{A}_{\eta}X+\tilde{\nabla}_{X}^{\perp}\eta$
respectively, we get the PDE system for $i=1,2,3,4$,
\begin{align}
\label{eq1} (F_i)_{xx}&=\beta_1\eta_i-\cos^2\theta F_i,\\
\label{eq2} (F_i)_{xy}&=\beta_2\eta_i,\\
\label{eq3} (F_i)_{yy}&=-\beta_1\eta_i-F_i,\\
\label{eq4} (\eta_i)_x&=-\frac{\beta_1}{\cos^2\theta}(F_i)_x-\beta_2(F_i)_y,\\
\label{eq5}
(\eta_i)_y&=-\frac{\beta_2}{\cos^2\theta}(F_i)_x+\beta_1(F_i)_y,
\end{align}
where $\beta_1$ and $\beta_2$ are as in Lemma \ref{lem1}. Obviously, the integrable conditions
are all satisfied. Moreover, we have $\xi_i=-\tan\theta(F_i)_x$ and
$F_5=x\sin\theta$, $\xi_5=\cos\theta$, $\eta_5=0$.

In the following, we will solve the above PDE system in three cases.

\textbf{Case 1.} $\beta_2=0$.

In this case, we can choose the direction of $\eta$ such that
$\beta_1=\cos\theta>0$, and then the PDE system becomes
\begin{align}
\label{a1} (F_i)_{xx}&=\cos\theta\eta_i-\cos^2\theta F_i,\\
\label{a2} (F_i)_{xy}&=0,\\
\label{a3} (F_i)_{yy}&=-\cos\theta\eta_i-F_i,\\
\label{a4} (\eta_i)_x&=-\frac{1}{\cos\theta}(F_i)_x,\\
\label{a5} (\eta_i)_y&=\cos\theta(F_i)_x.
\end{align}
From (\ref{a2}), we know that the solution has a separating form:
$F_i(x,y)=f_i(x)+g_i(y)$. Denote $\rho=\sqrt{1+\cos^2\theta}$. Taking
the derivative of (\ref{a1}) with respect to $x$ and using (\ref{a4}),
we get
\[f_i'''=-\rho^2f_i',\]
and then $f_i'(x)=k_i\cos(\rho x)+l_i\sin(\rho x)$. Taking the same operation
with respect to $y$, we find the solution has the form
\[F_i(x,y)=A_i\cos(\rho x)+B_i\sin(\rho x)+C_i\cos(\rho y)+D_i\sin(\rho y).\]
We can derive from (\ref{a1}) that
\[\eta_i(x,y)=-\frac{A_i}{\cos\theta}\cos(\rho x)-\frac{B_i}{\cos\theta}\sin(\rho x)+C_i\cos\theta\cos(\rho y)+D_i\cos\theta\sin(\rho y),\]
and we can also check that (\ref{a3})--(\ref{a5}) are all satisfied.

Since
\begin{align*}
(F_i)_x&=\rho\big(B_i\cos(\rho x)-A_i\sin(\rho x)\big),\\
(F_i)_y&=\rho\big(D_i\cos(\rho y)-C_i\sin(\rho y)\big),\\
\xi_i&=-\rho\tan\theta\big(B_i\cos(\rho x)-A_i\sin(\rho x)\big),
\end{align*}
and $F_x$, $F_y$ are orthonormal, we have
\begin{align*}
\cos^2\theta=\sum_i((F_i)_x)^2=&\rho^2\big(\sum_iB_i^2\cos^2(\rho x)+\sum_iA_i^2\sin^2(\rho x)-\sum_iA_iB_i\sin(2\rho x)\big),\\
1=\sum_i((F_i)_y)^2=&\rho^2\big(\sum_iD_i^2\cos^2(\rho y)+\sum_iC_i^2\sin^2(\rho y)-\sum_iC_iD_i\sin(2\rho y)\big),\\
0=\sum_i(F_i)_x(F_i)_y=&\rho^2\big(\sum_iB_iD_i\cos(\rho x)\cos(\rho y)+\sum_iA_iC_i\sin(\rho x)\sin(\rho y)\\
&-\sum_iB_iC_i\cos(\rho x)\sin(\rho y)-\sum_iA_iD_i\sin(\rho
x)\cos(\rho y)\big).
\end{align*}
Since $x$, $y$ are arbitrary, we have
\[\sum_iA_i^2=\sum_iB_i^2=\frac{\cos^2\theta}{\rho^2},\quad\sum_iC_i^2=\sum_iD_i^2=\frac{1}{\rho^2},\]
\[\sum_iA_iB_i=\sum_iC_iD_i=\sum_iB_iD_i=\sum_iA_iC_i=\sum_iB_iC_i=\sum_iA_iD_i=0,\]
and we can check that $F_x$, $F_y$, $\xi$, $\eta$ are orthonormal.
Hence, we have
\[\tilde{F}(x,y)=\frac{\cos\theta}{\rho}\cos(\rho x)\vec{e}_1+\frac{\cos\theta}{\rho}\sin(\rho x)\vec{e}_2+\frac{1}{\rho}\cos(\rho y)\vec{e}_3+\frac{1}{\rho}\sin(\rho y)\vec{e}_4.\]
where $\{\vec{e}_i\}_{i=1}^4$ is a fixed orthonormal basis of
$\mathbb{E}^4$. If we choose $\vec{e}_1=(1,0,0,0)$,
$\vec{e}_2=(0,1,0,0)$, $\vec{e}_3=(0,0,1,0)$,
$\vec{e}_4=(0,0,0,-1)$, the surface is locally given by
\begin{equation*}
F(x,y)=\left(\frac{\cos\theta}{\rho}\cos(\rho
x),\frac{\cos\theta}{\rho}\sin(\rho x),\frac{1}{\rho}\cos(\rho
y),-\frac{1}{\rho}\sin(\rho y),x\sin\theta\right).
\end{equation*}
This is the case $\nu_1=\rho=\sqrt{1+\cos^2\theta}$ (hence
$\mu_1=\rho$, $\mu_2=\nu_2=0$, $c_1=\frac{\cos\theta}{\rho}$,
$c_2=\frac{1}{\rho}$) in (\ref{Sol}).

\textbf{Case 2.} $\beta_1=0$.

In this case, we can choose the direction of $\eta$ such that
$\beta_2=\cos\theta>0$. The PDE system becomes
\begin{align}
\label{b1} (F_i)_{xx}&=-\cos^2\theta F_i,\\
\label{b2} (F_i)_{xy}&=\cos\theta\eta_i,\\
\label{b3} (F_i)_{yy}&=-F_i,\\
\label{b4} (\eta_i)_x&=-\cos\theta(F_i)_y,\\
\label{b5} (\eta_i)_y&=-\frac{1}{\cos\theta}(F_i)_x.
\end{align}
Solving (\ref{b1}) and (\ref{b3}), we find that the solution has the
form
\begin{align*}
F_i(x,y)=&~A_i\cos(x\cos\theta)\cos y+B_i\cos(x\cos\theta)\sin y\\
&+C_i\sin(x\cos\theta)\cos y+D_i\sin(x\cos\theta)\sin y.
\end{align*}
We can derive from (\ref{b2}) that
\begin{align*}
\eta_i=&~D_i\cos(x\cos\theta)\cos y-C_i\cos(x\cos\theta)\sin y\\
&-B_i\sin(x\cos\theta)\cos y+A_i\sin(x\cos\theta)\sin y,
\end{align*}
and we can check that (\ref{b4}) and (\ref{b5}) are satisfied.
Moreover, we have
\begin{align*}
(F_i)_x=~&\cos\theta(C_i\cos(x\cos\theta)\cos y+D_i\cos(x\cos\theta)\sin y\\
&-A_i\sin(x\cos\theta)\cos y-B_i\sin(x\cos\theta)\sin y),\\
(F_i)_y=&~B_i\cos(x\cos\theta)\cos y-A_i\cos(x\cos\theta)\sin y\\
&\,+D_i\sin(x\cos\theta)\cos y-C_i\sin(x\cos\theta)\sin y,\\
\xi_i=&~-\sin\theta\big(C_i\cos(x\cos\theta)\cos y+D_i\cos(x\cos\theta)\sin y\\
&\,-A_i\sin(x\cos\theta)\cos y-B_i\sin(x\cos\theta)\sin y\big).
\end{align*}
From the fact that $F_x$, $F_y$, $\xi$, $\eta$ are orthonormal,
a similar discussion as in Case 1 yields
\begin{align*}
\tilde{F}(x,y)=&\cos(x\cos\theta)\cos y\vec{e}_1+\cos(x\cos\theta)\sin y\vec{e}_2\\
&+\sin(x\cos\theta)\cos y\vec{e}_3+\sin(x\cos\theta)\sin y\vec{e}_4,
\end{align*}
where $\{\vec{e}_i\}_{i=1}^4$ is a fixed orthonormal basis of
$\mathbb{E}^4$. If we choose
$\vec{e}_1=(\frac{1}{\sqrt{2}},0,\frac{1}{\sqrt{2}},0)$,
$\vec{e}_2=(0,\frac{1}{\sqrt{2}},0,-\frac{1}{\sqrt{2}})$,
$\vec{e}_3=(0,\frac{1}{\sqrt{2}},0,\frac{1}{\sqrt{2}})$,
$\vec{e}_4=(-\frac{1}{\sqrt{2}},0,\frac{1}{\sqrt{2}},0)$, the
surface is locally given by
\begin{align*}
\begin{split}
F(x,y)=&\bigg(\frac{1}{\sqrt{2}}\cos(x\cos\theta+y),\frac{1}{\sqrt{2}}\sin(x\cos\theta+y),\frac{1}{\sqrt{2}}\cos(x\cos\theta-y),\\
&\,\frac{1}{\sqrt{2}}\sin(x\cos\theta-y),x\sin\theta\bigg).
\end{split}
\end{align*}
This is the case $\nu_1=1$ \big(hence $\mu_1=\mu_2=\cos\theta$,
$\nu_2=1$, $c_1=c_2=\frac{1}{\sqrt{2}}$\big) in (\ref{Sol}).

{\bf Case 3.} $\beta_1\beta_2\neq0$.

Taking the derivative of equation (\ref{eq1}) with respect to $x$,
and using equation (\ref{eq4}), we get
\[(F_i)_{xxx}=-\frac{\beta_1^2}{\cos^2\theta}(F_i)_x-\cos^2\theta(F_i)_x-\beta_1\beta_2(F_i)_y.\]
Taking the derivative with respect to $x$ again, and using equations
(\ref{eq2}), (\ref{eq1}), we get
\begin{equation}\label{eq6}
(F_i)_{xxxx}=\left(-\frac{\beta_1^2}{\cos^2\theta}-\beta_2^2-\cos^2\theta\right)(F_i)_{xx}-\beta_2^2\cos^2\theta
F_i.
\end{equation}
Similarly, taking the derivative of equation (\ref{eq3}) with respect to
$y$ twice, and using equations (\ref{eq5}), (\ref{eq2}),
(\ref{eq3}), we get
\[(F_i)_{yyy}=\frac{\beta_1\beta_2}{\cos^2\theta}(F_i)_x-\beta_1^2(F_i)_y-(F_i)_y,\]
and
\begin{equation}\label{eq7}
(F_i)_{yyyy}=\left(-\frac{\beta_2^2}{\cos^2\theta}-\beta_1^2-1\right)(F_i)_{yy}-\frac{\beta_2^2}{\cos^2\theta}F_i.
\end{equation}

The characteristic equation of (\ref{eq6}) is
\begin{equation}\label{eq6-1}
z^4+\left(\frac{\beta_1^2}{\cos^2\theta}+\beta_2^2+\cos^2\theta\right)z^2+\beta_2^2\cos^2\theta=0.
\end{equation}
Denote $b_1=\frac{\beta_1^2}{\cos^2\theta}+\beta_2^2+\cos^2\theta$
and $c_1=\beta_2^2\cos^2\theta$. Considering equation (\ref{eq6-1})
as a quadratic equation in $u=z^2$, the discriminant is
\[\Delta_1=b_1^2-4c_1=\frac{\beta_1^4}{\cos^4\theta}+\frac{2\beta_1^2\beta_2^2}{\cos^2\theta}+2\beta_1^2+\beta_1^4>0.\]
Since $c_1>0$, the two negative roots $u=-\mu_1^2$ and $u=-\mu_2^2$ of the equation are
\[-\mu_1^2=-\frac12(b_1+\sqrt{\Delta_1}),~~-\mu_2^2=-\frac12(b_1-\sqrt{\Delta_1}),\]
where we assume $\mu_1>0$, $\mu_2>0$.
\par Similarly, the characteristic equation of (\ref{eq7}) is
\begin{equation}\label{eq7-1}
w^4+\left(\frac{\beta_2^2}{\cos^2\theta}+\beta_1^2+1\right)w^2+\frac{\beta_2^2}{\cos^2\theta}=0.
\end{equation}
Denote $b_2=\frac{\beta_2^2}{\cos^2\theta}+\beta_1^2+1$ and
$c_2=\frac{\beta_2^2}{\cos^2\theta}$. Considering equation
(\ref{eq7-1}) as a quadratic equation as above, the discriminant is
\[\Delta_2=b_2^2-4c_2=\Delta_1>0 \]
and the two negative roots are
\[-\nu_1^2=-\frac12(b_2+\sqrt{\Delta_2}),~~-\nu_2^2=-\frac12(b_2-\sqrt{\Delta_2}),\]
where we assume $\nu_1>0$, $\nu_2>0$.

Now we denote $\Delta=\Delta_1=\Delta_2$. Since
$(F_i)_{xx}+(F_i)_{yy}=-(1+\cos^2\theta)F_i$ and
$\mu_1^2+\nu_2^2=\mu_2^2+\nu_1^2=1+\cos^2\theta$, the solution takes
the form
\begin{align*}
F_i(x,y)=&~c_1^{(i)}\cos(\mu_1x)\cos(\nu_2y)+c_2^{(i)}\cos(\mu_1x)\sin(\nu_2y)+c_3^{(i)}\sin(\mu_1x)\cos(\nu_2y)\\
&+c_4^{(i)}\sin(\mu_1x)\sin(\nu_2y)+c_5^{(i)}\cos(\mu_2x)\cos(\nu_1y)+c_6^{(i)}\cos(\mu_2x)\sin(\nu_1y)\\
&+c_7^{(i)}\sin(\mu_2x)\cos(\nu_1y)+c_8^{(i)}\sin(\mu_2x)\sin(\nu_1y).
\end{align*}
We can derive $\eta_i$ from (\ref{eq1}),
\begin{equation}\label{c1}
\begin{aligned}
\eta_i=&~\frac{1}{\beta_1}((F_i)_{xx}+\cos^2\theta F_i)\\
=&~\frac{\cos^2\theta-\mu_1^2}{\beta_1}\big(c_1^{(i)}\cos(\mu_1x)\cos(\nu_2y)+c_2^{(i)}\cos(\mu_1x)\sin(\nu_2y)\\
&+c_3^{(i)}\sin(\mu_1x)\cos(\nu_2y)+c_4^{(i)}\sin(\mu_1x)\sin(\nu_2y)\big)\\
&+\frac{\cos^2\theta-\mu_2^2}{\beta_1}\big(c_5^{(i)}\cos(\mu_2x)\cos(\nu_1y)+c_6^{(i)}\cos(\mu_2x)\sin(\nu_1y)\\
&+c_7^{(i)}\sin(\mu_2x)\cos(\nu_1y)+c_8^{(i)}\sin(\mu_2x)\sin(\nu_1y)\big).
\end{aligned}
\end{equation}
On the other hand, from (\ref{eq2})
\begin{equation}\label{c2}
\begin{aligned}
\eta_i=&~\frac{1}{\beta_2}(F_i)_{xy}\\
=&~\frac{\mu_1\nu_2}{\beta_2}\big(c_4^{(i)}\cos(\mu_1x)\cos(\nu_2y)-c_3^{(i)}\cos(\mu_1x)\sin(\nu_2y)\\
&-c_2^{(i)}\sin(\mu_1x)\cos(\nu_2y)+c_1^{(i)}\sin(\mu_1x)\sin(\nu_2y)\big)\\
&+\frac{\mu_2\nu_1}{\beta_2}\big(c_8^{(i)}\cos(\mu_2x)\cos(\nu_1y)-c_7^{(i)}\cos(\mu_2x)\sin(\nu_1y)\\
&-c_6^{(i)}\sin(\mu_2x)\cos(\nu_1y)+c_5^{(i)}\sin(\mu_2x)\sin(\nu_1y)\big).
\end{aligned}
\end{equation}

Comparing the first four terms, we find that
\begin{eqnarray*}
\frac{\cos^2\theta-\mu_1^2}{\beta_1}c_1^{(i)}=\frac{\mu_1\nu_2}{\beta_2}c_4^{(i)},\quad\frac{\cos^2\theta-\mu_1^2}{\beta_1}c_4^{(i)}=\frac{\mu_1\nu_2}{\beta_2}c_1^{(i)},\\
\frac{\cos^2\theta-\mu_1^2}{\beta_1}c_2^{(i)}=\frac{\mu_1\nu_2}{\beta_2}c_3^{(i)},\quad\frac{\cos^2\theta-\mu_1^2}{\beta_1}c_3^{(i)}=\frac{\mu_1\nu_2}{\beta_2}c_2^{(i)}.
\end{eqnarray*}

Since $\mu_1>0,\mu_2>0,\nu_1>0,\nu_2>0,$
\[2(\cos^2\theta-\mu_1^2)=\beta_1^2-\frac{\beta_1^2}{\cos^2\theta}-\sqrt{\Delta}<0,\]
\[2(\cos^2\theta-\mu_2^2)=\beta_1^2-\frac{\beta_1^2}{\cos^2\theta}+\sqrt{\Delta}>0,\]
we have that
\[(c_1^{(i)})^2=
(c_4^{(i)})^2,(c_2^{(i)})^2=(c_3^{(i)})^2.\]

Similarly, comparing the last four terms of (\ref{c1}) and (\ref{c2}), we obtain that
\[(c_5^{(i)})^2=(c_8^{(i)})^2,(c_6^{(i)})^2=(c_7^{(i)})^2.\]

Furthermore, we have for $\beta_1\beta_2>0$,
\[c_1^{(i)}=-c_4^{(i)},~c_2^{(i)}=c_3^{(i)},~c_5^{(i)}=c_8^{(i)},~c_6^{(i)}=-c_7^{(i)};\]
and for $\beta_1\beta_2<0$,
\[c_1^{(i)}=c_4^{(i)},~c_2^{(i)}=-c_3^{(i)},~c_5^{(i)}=-c_8^{(i)},~c_6^{(i)}=c_7^{(i)}.\]
\par Hence, for $\beta_1\beta_2>0$, we can set
\[F_i(x,y)=A_i\cos(\mu_1x+\nu_2y)+B_i\sin(\mu_1x+\nu_2y)+C_i\cos(\mu_2x-\nu_1y)+D_i\sin(\mu_2x-\nu_1y).\]
In fact, we can easily verify that the solution above satisfies the PDE system (\ref{eq1})--(\ref{eq5}).

Moreover, using the fact that $F_x$, $F_y$, $\xi$,
$\eta$ are orthonormal, we can derive that
\begin{align*}
\tilde{F}(x,y)=&~c_1\cos(\mu_1x+\nu_2y)\vec{e}_1+c_1\sin(\mu_1x+\nu_2y)\vec{e}_2\\
&+c_2\cos(\mu_2x-\nu_1y)\vec{e}_3+c_2\sin(\mu_2x-\nu_1y)\vec{e}_4
\end{align*}
where $\{\vec{e}_i\}_{i=1}^4$ is a fixed orthonormal basis of
$\mathbb{E}^4$, $c_1,c_2$ are positive constants  satisfying
$c_1^2=\frac{\nu_1^2-1}{\nu_1^2-\nu_2^2}$,
$c_2^2=\frac{1-\nu_2^2}{\nu_1^2-\nu_2^2}$. If we choose the natural
basis of $\mathbb{E}^4$, the surface is locally given by
\begin{align*}
F(x,y)=&~\big(c_1\cos(\mu_1x+\nu_2y),c_1\sin(\mu_1x+\nu_2y),c_2\cos(\mu_2x-\nu_1y),\\
&c_2\sin(\mu_2x-\nu_1y),x\sin\theta\big).
\end{align*}
This is the case $1<\nu_1<\sqrt{1+\cos^2\theta}$ in (\ref{Sol}).

Similarly, for $\beta_1\beta_2<0$, the surface is locally given by
\begin{align*}
F(x,y)=&~\big(c_1\cos(\mu_1x-\nu_2y),c_1\sin(\mu_1x-\nu_2y),c_2\cos(\mu_2x+\nu_1y),\\
&c_2\sin(\mu_2x+\nu_1y),x\sin\theta\big).
\end{align*}
If we change the coordinate to be $\{x,-y\}$, then this is the case
$1<\nu_1<\sqrt{1+\cos^2\theta}$ in (\ref{Sol}).

Here we need to derive the relations among the constants $\nu_1$,
$\nu_2$, $\mu_1$, $\mu_2$, $c_1$, $c_2$ when
$1<\nu_1<\sqrt{1+\cos^2\theta}$. In fact, by the definitions of
$\nu_1$ and $\nu_2$, we have
$\nu_1^2\nu_2^2=\frac{\beta_2^2}{\cos^2\theta}$ and
\begin{align*}
\nu_1^2+\nu_2^2&=\frac{\beta_2^2}{\cos^2\theta}+\beta_1^2+1\\
&=\nu_1^2\nu_2^2+\cos^2\theta-\cos^2\theta\nu_1^2\nu_2^2+1\\
&=\nu_1^2\nu_2^2\sin^2\theta+\cos^2\theta+1.
\end{align*}
Since $1+\cos^2\theta<\frac{1}{\sin^2\theta}$ when
$\theta\in(0,\frac{\pi}{2})$, we have
$\nu_2^2=\frac{1+\cos^2\theta-\nu_1^2}{1-\nu_1^2\sin^2\theta}$. By a
direct computation, we have
\begin{align*}
&\mu_1^2=1+\cos^2\theta-\nu_2^2=\frac{\nu_1^2\cos^4\theta}{1-\nu_1^2\sin^2\theta},\\
&\mu_2^2=1+\cos^2\theta-\nu_1^2,\\
&c_1^2=\frac{\nu_1^2-1}{\nu_1^2-\nu_2^2}=\frac{1-\nu_1^2\sin^2\theta}{1+\cos^2\theta-\nu_1^2\sin^2\theta},\\
&c_2^2=\frac{1-\nu_2^2}{\nu_1^2-\nu_2^2}=\frac{\cos^2\theta}{1+\cos^2\theta-\nu_1^2\sin^2\theta}.
\end{align*}
Hence we complete the proof of Theorem \ref{thm2}.
\end{proof}


\textbf{Acknowledgements}

The authors wish to  express their gratitude to Professors Haizhong
Li and Luc Vrancken for their suggestions and useful discussions.
This work was done partially while the first three authors visited
Departement Wiskunde, Katholieke Universiteit Leuven, Belgium. They
would like to thank the institute for its hospitality.



\begin{thebibliography}{99}

\bibitem{AR04} U. Abresch, H. Rosenberg, \ {\em A Hopf differential for constant mean curvature surfaces in
$\Sf^2\times\R$ and $\Hy^2\times\R$}, Acta Math.  \textbf{193} (2004), 141--174.

\bibitem{CdS07}
P. Cermelli and A. J. Di Scala, \emph{Constant-angle surfaces in
liquid crystals}, Philosophical Magazine {\bf 87} (2007),
1871--1888.

\bibitem{DFV09} F. Dillen, J. Fastenakels, J. Van der Veken,\ {\em Surfaces in  $\Sf^2\times\R$ with a canonical principal direction},
Ann. Glob. Anal. Geom. \textbf{35}  (2009),  no. 4, 381--396.

\bibitem{DFVV07} F. Dillen, J. Fastenakels, J. Van der Veken, L. Vrancken,\ {\em Constant angle surfaces in $\Sf^2\times\R$}, Monatsh. Math. \textbf{152} (2007), 89--96.

\bibitem{DM07}
F. Dillen and M. I. Munteanu, \emph{Surfaces in $\H^+ \times \R$},
Proceedings of the conference Pure and Applied Differential
Geometry, PADGE 2007 (Brussels, 2007), eds. F. Dillen and I. Van de
Woestyne, Shaker Verlag, Aachen, 2007 (ISBN 978-3-8322-6759-9),
185--193.

\bibitem{DM09} F. Dillen, M. Munteanu,\  {\em Constant Angle Surfaces in $\Hy^2\times \R$},  Bull. Braz. Math. Soc. (N.S.)\textbf{40} (2009), 85--97.

\bibitem{DR10} A. J. Di Scala and G. Ruiz-Hernandez, \ {\em Higher codimensional Euclidean helix submanifolds}, Kodai Math. J. \textbf{33} (2010), 192--210.

\bibitem{FMV09}
J. Fastenakels, M. I. Munteanu and J. Van der Veken,
\emph{Constant angle surfaces in the Heisenberg group},
Acta Mathematica Sinica (English Series), {\bf 27} (2011) 4, 747 - 756.

\bibitem{FM07} I. Fern\'andez and P. Mira,\ {\em Harmonic maps and constant mean curvature surfaces in $\Hy^2\times \R$}, Amer. J. Math.  \textbf{129}  (2007), 1145--1181.

\bibitem{LM09}
R. L\'opez and M. I. Munteanu, \emph{Constant angle surfaces in Minkowski space},
Bull.Belg. Math. Soc. - Simon Stevin {\bf 18} (2011).

\bibitem{MR04}W. Meeks, H. Rosenberg,
{\em Stable minimal surfaces in $\mathbb{M}^{2} \times \mathbb{R}$},
\rm J. Differential Geometry {\bf 68} (3) (2004), 515-534.

\bibitem{MN09}
M. I. Munteanu and A. I. Nistor, \emph{A new approach on constant
angle surfaces in $\E^3$}, Turkish J. Math. {\bf 33} (2009), 2,
169--178.

\bibitem{Nis09} A. I. Nistor,
\emph{Certain constant angle surfaces constructed on curves}, Int. Electron. J. Geom. {\bf 4} (2011), 79--87 .

\bibitem{Ros02}  H. Rosenberg, \ {\em Minimal  surfaces in   $M^2\times\R$}, Ilinois J. Math.  \textbf{46} (2002), 1177--1195.

\bibitem{To10} R. Tojeiro, \ {\em On a class of hypersurfaces  in  $\Sf^n\times\R$ and $\Hy^n\times\R$},  Bull. Braz. Math. Soc. (N.S.)  \textbf{41}  (2010), 199–209.

\end{thebibliography}
\end{document}